\documentclass[12pt,twoside]{article}
\usepackage{amsfonts}
\usepackage{amsmath}
 \pagestyle{myheadings}{\markboth{\sl A. Jemai}{\sl
 }

\newtheorem{lemma}{Lemma}[section]

\newtheorem{theorem}{Theorem }[section]
\newtheorem{remark}{Remark}[section]

\newtheorem{proof}{Proof.}[section]
\numberwithin  {equation}{section}

\begin{document}

\title{Dunkl-spherical maximal function}

\author{ Abdessattar Jemai}

\maketitle

\begin{abstract} In this paper, we study the $L^p$-boundedness of the spherical maximal
function associated to the Dunkl operators.
 \end{abstract}
{\small\bf Keywords: }{\small Dunkl operators; Dunkl transform;
Dunkl translations; Spherical maximal
function.}\\
\noindent {\small \bf 2010 AMS Mathematics Subject Classification:}
{42B10, 42B25, 44A15, 44A35.}
\section{Introduction and backgrounds}
In \cite{S2}, E. Stein introduced the spherical maximal function by
$$M(f)(x)=\sup_{r>0}\int_{\mathbb{S}^{d-1}}f(x-ry)d\sigma(y),$$
where $d\sigma$ is the surface measure on $\mathbb{S}^{d-1}$ and
showed that $M$ is bounded from $L^{p}(\mathbb{R}^d)$ to
$L^{p}(\mathbb{R}^d)$ for the optimal range $p> \frac{d}{d-1}$ with
$d\geq3$. The case $d = 2$ was proved by Bourgain in \cite{B}. The
aim of this work is to extend these results to the Dunkl
setting.\par

To begin, we recall some results in Dunkl theory (see \cite{D, J,
R1, R2, R4, TX}) and we refer for more details to the
survey \cite{R3}.\\
Let $G\subset O(\mathbb{R}^{d})$ be a finite reflection group associated to a reduced
 root system $R$. For $\alpha\in R$, we denote by
$\mathbb{H}_\alpha$ the hyperplane orthogonal to $\alpha$. We denote
by $k$ a nonnegative multiplicity function defined on $R$ with the
property that $k$ is $G$-invariant. For a given
$\beta\in\mathbb{R}^d\backslash\bigcup_{\alpha\in R}
\mathbb{H}_\alpha$, we fix a positive subsystem $R_+=\{\alpha\in R:
\langle \alpha,\beta\rangle>0\}$. We associate with $k$ the index
$\displaystyle\gamma = \sum_{\xi \in R_+} k(\xi)$ and a weighted
measure $\nu_k$ given by
\begin{eqnarray*}d\nu_k(x):=w_k(x)dx\quad
\mbox{ where }\;\;w_k(x) = \prod_{\xi\in R_+} |\langle
\xi,x\rangle|^{2k(\xi)}, \quad x \in
\mathbb{R}^d.\end{eqnarray*}Further, we introduce the Mehta-type
constant $c_k$ by
$$c_k = \left(\int_{\mathbb{R}^d} e^{- \frac{\|x\|^2}{2}}
w_k (x)dx\right)^{-1}.$$ For every $1 \leq p \leq + \infty$, we
denote by $L^p_k(\mathbb{R}^d)$,
 the spaces $L^{p}(\mathbb{R}^d,
d\nu_k(x)),$
 and we use $\| \;\|_{p,k}$ as a shorthand for $\|\ \;\|_{L^p_k( \mathbb{R}^d)}$. \\By
using the homogeneity of degree $2\gamma$ of $w_k$, for a radial
function $f$ in $L^1_k ( \mathbb{R}^d)$, there exists a function $F$
on $[0, + \infty)$ such that $f(x) = F(\|x\|)$, for all $x \in
\mathbb{R}^d$. The function $F$ is integrable with respect to the
measure $r^{2\gamma+d-1}dr$ on $[0, + \infty)$ and we have
 \begin{eqnarray*}\int_{\mathbb{R}^d}  f(x)\,d\nu_k(x)&=&\int^{+\infty}_0
\Big( \int_{\mathbb{S}^{d-1}}f(ry)w_k(ry)d\sigma(y)\Big)r^{d-1}dr\\
&=&
 \int^{+\infty}_0
\Big( \int_{\mathbb{S}^{d-1}}w_k(ry)d\sigma(y)\Big)
F(r)r^{d-1}dr\\&= & d_k\int^{+ \infty}_0 F(r)
r^{2\gamma+d-1}dr,
\end{eqnarray*}
  where $S^{d-1}$
is the unit sphere on $\mathbb{R}^d$ with the normalized surface
measure $d\sigma$  and \begin{eqnarray*}d_k=\int_{\mathbb{S}^{d-1}}w_k
(x)d\sigma(x) = \frac{c^{-1}_k}{2^{\gamma +\frac{d}{2} -1}
\Gamma(\gamma + \frac{d}{2})}\;.  \end{eqnarray*}

The Dunkl operators $T_j,\; 1\leq j\leq d\,$ are the following
$k$-deformations of directional derivatives $\frac{\partial}{x_j} $
given by:
$$T_jf(x)=\frac{\partial f}{\partial x_j}(x)+\sum_{\alpha\in R_+}k(\alpha)
\alpha_j\,\frac{f(x)-f(\rho_\alpha(x))}{\langle\alpha,x\rangle}\,,\quad
f\in\mathcal{E}(\mathbb{R}^d)\,,\quad x\in\mathbb{R}^d\,,$$ where
$\rho_\alpha$ is the reflection on the hyperplane
$\mathbb{H}_\alpha$ and $\alpha_j=\langle\alpha,e_j\rangle,$
$(e_1,\ldots,e_d)$ being the canonical basis of $\mathbb{R}^d$.

Notice that in the case $k\equiv0$, the weighted function
$w_k\equiv1$,  the measure $\nu_k$ coincide with the Lebesgue
measure and the operator $T_{\xi}$ reduced to the corresponding
partial derivatives $\frac{\partial}{x_j} $. Therefore Dunkl
analysis can be viewed as a generalization of classical Fourier
analysis.

For $y \in \mathbb{C}^d$, the system
$$\left\{\begin{array}{lll}T_ju(x,y)&=&y_j\,u(x,y),\qquad1\leq j\leq d\,,\\  &&\\
u(0,y)&=&1\,.\end{array}\right.$$ admits a unique analytic solution
on $\mathbb{R}^d$, denoted by $E_k(x,y)$ and called the Dunkl
kernel. This kernel has a unique holomorphic extension to
$\mathbb{C}^d \times \mathbb{C}^d $. We have for all $\lambda\in
\mathbb{C}$ and $z, z'\in \mathbb{C}^d,\;
 E_k(z,z') = E_k(z',z)$,  $E_k(\lambda z,z') = E_k(z,\lambda z')$ and for $x, y
\in \mathbb{R}^d,\;|E_k(x,iy)| \leq 1$.\\

The Dunkl transform $\mathcal{F}_k$ is defined for $f \in
\mathcal{D}( \mathbb{R}^d)$ by
$$\mathcal{F}_k(f)(x) =c_k\int_{\mathbb{R}^d}f(y) E_k(-ix, y)d\nu_k(y),\quad
x \in \mathbb{R}^d.$$  We list some known properties of this
transform:
\begin{itemize}
\item[i)] The Dunkl transform of a function $f
\in L^1_k( \mathbb{R}^d)$ has the following basic property
\begin{eqnarray*}\| \mathcal{F}_k(f)\|_{\infty,k} \leq
 \|f\|_{ 1,k}\;. \end{eqnarray*}
\item[ii)] The Dunkl transform is an automorphism on the Schwartz space $\mathcal{S}(\mathbb{R}^d)$.
\item[iii)] When both $f$ and $\mathcal{F}_k(f)$ are in $L^1_k( \mathbb{R}^d)$,
 we have the inversion formula \begin{eqnarray*} f(x) =   \int_{\mathbb{R}^d}\mathcal{F}_k(f)(y) E_k( ix, y)d\nu_k(y),\quad
x \in \mathbb{R}^d.\end{eqnarray*}
\item[iv)] (Plancherel's theorem) The Dunkl transform on $\mathcal{S}(\mathbb{R}^d)$
 extends uniquely to an isometric automorphism on
$L^2_k(\mathbb{R}^d)$.
\end{itemize}

The Dunkl translation operators $\tau_x$, $x\in\mathbb{R}^d$ is defined on
$L^2_k(\mathbb{R}^d)$ by
\begin{eqnarray*}\mathcal{F}_k(\tau_x(f))(y)=E_k(i x, y)\mathcal{F}_k(f)(y)
\quad y\in\mathbb{R}^d.\end{eqnarray*}

As an operator on $L_k^2(\mathbb{R}^d)$, $\tau_x$ is bounded.
According to (\cite{TX}, Theorem 3.7), the operator $\tau_x$ can be
extended to the space of radial functions
$L^p_k(\mathbb{R}^d)^{rad},$ $1 \leq p \leq 2$ and we have for a
function $f$ in $L^p_k(\mathbb{R}^d)^{^{rad}}$,
\begin{eqnarray*}\|\tau_x(f)\|_{p,k} \leq \|f\|_{p,k}.\end{eqnarray*}
It was shown in \cite{R5} that if $f$ is a radial function in
$\mathcal{S}(\mathbb{R}^d)$ with $f(y)=\widetilde{f}(\|y\|)$, then
\begin{eqnarray}\label{tra}
  \tau_x (f)(y)=\int_{\mathbb{R}^{d}}\widetilde{f}(A(x,y,\eta))d\mu_{x}(\eta)
\end{eqnarray}
where $A(x,y,\eta)=\sqrt{\|x\|^{2}+\|y\|^{2}-2<y,\eta>}$ and $\mu_{x}$ is a probability measure supported in the convex hull
$co(G.x)$ of the $G$-orbit of $x$ in $\mathbb{R}^d$. We observe
that,
\begin{eqnarray}
\label{min} \eta \in co(G.x)\Longrightarrow \min_{g\in G}\|g.x-y\|\leq  A(x,y,\eta) \leq \max_{g\in
  G}\|g.x-y\|.
\end{eqnarray}
We collect below some useful facts:

\begin{itemize}
\item[i)] For all $x,y\in\mathbb{R}^d$, $\tau_x(f)(y)=\tau_y(f)(x)$.
\item[ii)] For  $f\in L_k^2(\mathbb{R}^d)\cap L_k^1(\mathbb{R}^d)$
\begin{eqnarray}\label{3frad}\int_{\mathbb{R}^d}\tau_x(f)(y)d\nu_k(y)=\int_{\mathbb{R}^d}f(y)d\nu_k(y).\end{eqnarray}
\item[iii)] For all $x\in\mathbb{R}^d$ and $f, g\in L_k^2(\mathbb{R}^d)$
\begin{eqnarray}\label{3prmi}\int_{\mathbb{R}^d}\tau_x(f)(y)g(y)d\nu_k(y)=\int_{\mathbb{R}^d}f(y)\tau_x(g)(y)d\nu_k(y).\end{eqnarray}
\end{itemize}
The Dunkl convolution product $\ast_k$ of two functions $f$ and $g$
in $L^2_k(\mathbb{R}^d)$ is given by
\begin{eqnarray*}(f\; \ast_k g)(x) = \int_{\mathbb{R}^d} \tau_x (f)(-y) g(y) d\nu_k(y),\quad
x \in \mathbb{R}^d.\end{eqnarray*} The Dunkl convolution product is
commutative and for $f,\,g \in \mathcal{D}( \mathbb{R}^d)$, we have
\begin{eqnarray}\label{ast}\mathcal{F}_k(f\,\ast_k\, g) =
\mathcal{F}_k(f) \mathcal{F}_k(g).\end{eqnarray} It was proved in
(\cite{TX}, Theorem 4.1) that when $g$ is a bounded radial function
in $L^1_k( \mathbb{R}^d)$, then the application $f\longrightarrow
f\,\ast_k\, g$ initially defined on the intersection of
$L^1_k(\mathbb{R}^d)$ and $L^2_k(\mathbb{R}^d)$ extends to
$L^p_k(\mathbb{R}^d)$, $1\leq p\leq +\infty$ as a bounded operator.
In particular, \begin{eqnarray*}\|f \ast_k g\|_{p,k} \leq
\|f\|_{p,k} \|g\|_{1,k}.\end{eqnarray*}

(see \cite{R5,R6} ) The Dunkl transform of $\sigma$ is given by
\begin{eqnarray}\label{sig}\mathcal{F}_k(\sigma)(x)=\frac{1}{c_k}\int_{\mathbb{S}^{d-1}} E_k(-ix, y)\omega_k(y)d\sigma(y)=
c_\gamma j_{\frac{2\gamma+d}{2}-1}( \|x\|),\end{eqnarray}
where $j_{\frac{2\gamma+d}{2}-1}$ is the Bessel function of the first type and $c_\gamma=\frac{1}{2^{\gamma+\frac{d}{2}-1}\Gamma(\gamma+\frac{d}{2})}$.\\
In particular (see \cite{S3}), the function
\begin{eqnarray}\label{sigm}\mathcal{F}_k(\sigma)(r\xi)=O(r^{-\frac{2\gamma+d-1}{2}}),\qquad r\rightarrow +\infty.\end{eqnarray}
\begin{eqnarray}\label{par}\frac{1}{r}\frac{\partial}{\partial r}\mathcal{F}_k(\sigma)(r\xi)=O(r^{-\frac{2\gamma+d+1}{2}}),\qquad r\rightarrow +\infty.\end{eqnarray}

Along this paper we use $C$ to denote a suitable positive constant
which is not necessarily the same in each occurrence and we write
for $x \in \mathbb{R}^d, \|x\| = \sqrt{\langle x,x\rangle}$.
Furthermore, we denote by

$\bullet\quad \mathcal{E}(\mathbb{R}^d)$ the space of infinitely
differentiable functions on $\mathbb{R}^d$.

$\bullet\quad \mathcal{S}(\mathbb{R}^d)$ the Schwartz space of
functions in $\mathcal{E}( \mathbb{R}^d)$ which are rapidly
decreasing as well as their derivatives.

$\bullet\quad \mathcal{D}(\mathbb{R}^d)$ the subspace of
$\mathcal{E}(\mathbb{R}^d)$ of compactly supported functions.

\section{Dunkl-spherical maximal function}
For $f\in L^2_k(\mathbb{R}^d)$, we define the maximal function $\mathcal{M}_{k}f$  by
\begin{eqnarray}\label{max}\mathcal{M}_{k}(f)(x)=\sup_{r>0}\frac{1}{\nu_k(B(0,r))}\left|\int_{B(0,r)}\tau_{x}f(y)d\nu_{k}(y)\right|,\end{eqnarray}
where $B(0,r)$ is the ball of radius $r$ centered at $0$ and $x\in\mathbb{R}^d$.\\
It was proved in \cite{TX} that the maximal function  is bounded on $f\in L^p_k(\mathbb{R}^d)$
 for $1<p\leq\infty$,
and of weak type $(1,1)$, for $f\in L^1_k(\mathbb{R}^d)$
that is $a>0$, \begin{eqnarray}\label{fai}\int_{E(a)}d\nu_{k}(x)\leq \frac{C}{a}\|f\|_{1,k}\end{eqnarray}
where $E(a)=\{x: \mathcal{M}_{k}f(x)>a\}$ and $C$ is a constant independent of $a$
and $f$.
\par As
in \cite{mejj}, we define the spherical mean operator on \\$A_k(\mathbb{R}^d)=\{ f\in L^1_k(\mathbb{R}^d):\; \mathcal{F}_{k}(f)\in L^1_k(\mathbb{R}^d)\}$,
for $x\in\mathbb{R}^d$ and $r>0$ by
\begin{eqnarray*}\mathbf{S}_{r}(f)(x)&=&\frac{1}{d_k}\int_{\mathbb{S}^{d-1}}\tau_{x}(f(-ry))\omega_k(y)d\sigma(y)\\&=&
\frac{1}{d_k}\int_{r\mathbb{S}^{d-1}}\tau_{x}(f(-y))\omega_k(\frac{y}{r})d\sigma_{r}(y).
\end{eqnarray*}
Now, the Dunkl-spherical maximal function $M(f)$ is given by
$$M(f)(x)=\sup_{r>0}|\mathbf{S}_{r}(f)(x)|,\quad x\in\mathbb{R}^d.$$
\begin{theorem}
Let $2\gamma+d\geq2$ and $\frac{2\gamma+d}{2\gamma+d-1}<p<2\gamma+d$. Then there exists
a constant $C>0$ such that for all $f\in L^p_k(\mathbb{R}^d)$,
\begin{eqnarray}\label{for} \|M(f)\|_{p,k}\leq C\, \|f\|_{p,k}.\end{eqnarray}
\end{theorem}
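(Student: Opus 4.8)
The plan is to adapt Stein's classical argument to the Dunkl setting, passing through the Dunkl transform side and exploiting the decay estimates on $\mathcal{F}_k(\sigma)$ already recorded in \eqref{sigm} and \eqref{par}. First I would express the spherical mean as a Dunkl convolution: using the definition of $\mathbf{S}_r$ together with the convolution formula $\mathcal{F}_k(f\ast_k g)=\mathcal{F}_k(f)\mathcal{F}_k(g)$, one checks that $\mathbf{S}_r(f)=f\ast_k\sigma_r$, where $\sigma_r$ is the suitably dilated surface measure, and that on the Fourier side $\mathcal{F}_k(\mathbf{S}_r(f))(\xi)=\mathcal{F}_k(f)(\xi)\,\mathcal{F}_k(\sigma)(r\xi)$. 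This reduces the problem to controlling the multiplier operator with symbol $m_r(\xi)=\mathcal{F}_k(\sigma)(r\xi)=c_\gamma j_{\frac{2\gamma+d}{2}-1}(r\|\xi\|)$ uniformly in $r$ through the supremum.

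The key step is a Littlewood--Paley / analytic interpolation decomposition. I would introduce a smooth dyadic partition of unity in the frequency variable $\|\xi\|$, writing $\mathcal{F}_k(\sigma)=\sum_{j\geq 0}\phi_j\,\mathcal{F}_k(\sigma)$ plus a smooth piece near the origin, and correspondingly decompose $M(f)\leq\sum_j M_j(f)$ where each $M_j$ is a maximal operator with frequency localized to $\|\xi\|\sim 2^j$. For the $L^2$ bound on each piece I would use the standard trick of dominating the supremum over $r$ by the square function
\begin{eqnarray*}
\sup_{r>0}|\mathbf{S}_r^{(j)}(f)(x)|^2\leq 2\Big(\int_0^\infty|\mathbf{S}_r^{(j)}(f)(x)|^2\frac{dr}{r}\Big)^{1/2}\Big(\int_0^\infty\Big|r\frac{\partial}{\partial r}\mathbf{S}_r^{(j)}(f)(x)\Big|^2\frac{dr}{r}\Big)^{1/2},
\end{eqnarray*}
then applying the Dunkl--Plancherel theorem to each factor. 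The decay \eqref{sigm} of order $-\frac{2\gamma+d-1}{2}$ controls the first factor and the derivative decay \eqref{par} controls the second; together they yield $\|M_j(f)\|_{2,k}\leq C\,2^{-j\frac{2\gamma+d-2}{2}}\|f\|_{2,k}$, which sums because $2\gamma+d>2$. Separately, a crude triangle-inequality bound gives the trivial estimate $\|M_j(f)\|_{p,k}\leq C\,2^{j\varepsilon}\|f\|_{p,k}$ for any $p$, using the $L^1_k$-boundedness of the translation on radial functions and the normalized measure.

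I would then interpolate between the good $L^2$ bound with exponential gain and the trivial bound with small loss to obtain, for each $j$, a bound $\|M_j(f)\|_{p,k}\leq C\,2^{-j\delta(p)}\|f\|_{p,k}$ with $\delta(p)>0$ precisely in the stated range $\frac{2\gamma+d}{2\gamma+d-1}<p<2\gamma+d$; summing in $j$ gives \eqref{for}. The main obstacle I anticipate is technical rather than conceptual: the Dunkl translation $\tau_x$ is only known to be $L^p_k$-bounded on \emph{radial} functions for $1\leq p\leq 2$, so the square-function argument and the interpolation must be set up so that every application of $\tau_x$ falls on the radial kernel $\sigma_r$ (or its dyadic pieces) rather than on the arbitrary input $f$. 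Using the representation \eqref{tra} of $\tau_x$ on radial Schwartz functions together with the support bound \eqref{min} to localize, and the self-adjointness relation \eqref{3prmi} to move translations onto the kernel, should let me carry the $L^p$ estimates through; making the endpoint behavior at $p=2\gamma+d$ and $p=\frac{2\gamma+d}{2\gamma+d-1}$ match the classical Euclidean thresholds (recovered when $\gamma=0$) is where the bookkeeping will be most delicate.
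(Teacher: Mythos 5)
Your architecture is the same as the paper's: a dyadic frequency decomposition $m_j=\mathcal{F}_k(\sigma)\psi_j$ of the multiplier, an $L^2$ bound for each piece with gain $2^{-j\frac{2\gamma+d-2}{2}}$ proved by the $g$-function/$\tilde g$-function trick together with the decay estimates \eqref{sigm} and \eqref{par}, a lossy bound at the other endpoints, interpolation, and summation in $j$. However, there is a genuine quantitative error at the interpolation step. Your ``trivial estimate'' $\|M_j(f)\|_{p,k}\leq C\,2^{j\varepsilon}\|f\|_{p,k}$ for every $p$, with ``small loss'' $\varepsilon$, cannot be true: if it were, interpolating it against the $L^2$ gain would give $\|M_j(f)\|_{p,k}\leq C2^{-j\delta(p)}\|f\|_{p,k}$ with $\delta(p)>0$ on a range of $p$ that tends to $(1,\infty)$ as $\varepsilon\to 0$, and summing in $j$ would make $M$ bounded on $L^p_k$ for exponents well below $\frac{2\gamma+d}{2\gamma+d-1}$. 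Already in the case $k\equiv 0$, $d\geq 3$, this contradicts Stein's counterexample showing the spherical maximal operator is unbounded on $L^p(\mathbb{R}^d)$ for $p\leq\frac{d}{d-1}$. The correct endpoint bounds carry a full power $2^j$ of loss: $M_{\varphi_j}$ is of weak type $(1,1)$ with constant $C2^j$ (the paper's Lemma 2.2) and bounded on $L^\infty_k$ with constant $C2^j$ (Lemma 2.4), and it is precisely the balance of this $2^j$ loss against the $2^{-j\frac{2\gamma+d-2}{2}}$ gain that produces the range $\frac{2\gamma+d}{2\gamma+d-1}<p<2\gamma+d$. Your own remark that the interpolation should give decay ``precisely in the stated range'' is consistent only with the loss $2^j$, so your bookkeeping is internally inconsistent; the only reading that makes your plan work is $\varepsilon=1$.

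Relatedly, these endpoint bounds are not a ``crude triangle-inequality'' step in the Dunkl setting --- they are where the Dunkl-specific work lives. Both rest on the kernel estimate \eqref{lem}, $|\varphi_j(x)|\leq C2^j(1+\|x\|)^{-(2\gamma+d+1)}$, whose proof requires writing $\varphi_j$ as a spherical mean of $\mathcal{F}^{-1}_k(\psi_j)$ and invoking the R\"osler representation \eqref{tra} of $\tau_x$ on radial functions together with the localization \eqref{min}. Then, to dominate $M_{\varphi_j}(f)$ pointwise by $C2^j\mathcal{M}_k(|f|)$ (so that the weak $(1,1)$ bound follows from \eqref{fai}), one must decompose $|\varphi_{j,r}|$ into annular pieces $|\varphi_{j,r}|\chi_{A_i}$ --- each of which is radial, which is exactly what allows the translation $\tau_x$ to be controlled --- and use the self-adjointness relation \eqref{3prmi}. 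You correctly identify these tools in your closing paragraph, but you leave the estimates themselves unproved, and without them neither the weak $(1,1)$ nor the $L^\infty$ bound (with its necessary constant $2^j$) is available. Replace the $2^{j\varepsilon}$ claim by the $2^j$-loss endpoint bounds and supply the kernel estimate, and your plan becomes the paper's proof.
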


Before proving the theorem, we need to establish some useful
results. In fact, fix a function $\psi_{0}\in
\mathcal{S}(\mathbb{R}^d)$ which is the Dunkl transform of a
$\mathcal{C}^{\infty}$-radial function with compact support such
that:
  \begin{eqnarray}\label{psi}
 \psi_{0}(0)=1,\qquad\big(\frac{\partial^{i}}{\partial r^{i}}\psi_{0}\big)(0)=0,
 \end{eqnarray}
 for $1\leq i< \frac{2\gamma +d}{2}$ and where $\frac{\partial}{\partial r}$ denotes the derivation in the radial direction.
To obtain a such function, taking  $\psi\in \mathcal{S}(\mathbb{R}^d)$  with  $\mathcal{F}_k(\psi)$ is a $\mathcal{C}^{\infty}$-
radial function with  compact support and
$$\psi(0)\neq 0 \quad\text{and}\quad\psi_{0}(r\xi)=\big(\sum_{j=0}^{[\frac{2\gamma +d}{2}]}a_{j}r^{j}\big)\psi(r\xi),$$
for $\xi\in \mathbb{S}^{d-1}$.
The coefficients $a_{j}$ are solutions of triangular system given by the conditions (\ref{psi})
and $[\frac{2\gamma +d}{2}]$ is the least integer not less than $\frac{2\gamma +d}{2}$.
\par Now set the functions $\psi_j$ with
\begin{eqnarray}\label{ps2}
\psi_{1}(y)=\psi_{0}(\frac{y}{2})-\psi_{0}(y) \quad \text{and} \quad \psi_{j}(y)=\psi_{1}(2^{-(j-1)}y),\quad j\geq 1.
\end{eqnarray}
Thus, $\psi_{j}$ is radial and $\mathcal{F}_k(\psi_j)$ is a compact supported functions. It follows that  for some constants  $t,C>0$ we have
\begin{eqnarray}\label{ps1}
|\psi_{1}(y)|\leq C\|y\|^{\frac{2\gamma +d}{2}},\quad\text{if}\quad \|y\|\leq t,
\end{eqnarray} and
\begin{eqnarray}\label{ps}\sum_{j=0}^{+\infty}\psi_{j}(y)=1, \qquad  y\in\mathbb{R}^{d}.\end{eqnarray}
Let $m_{j}=\mathcal{F}_{k}(\sigma)\psi_{j}$ and let $\varphi_{j}$ the function in $\mathcal{S}(\mathbb{R}^d)$ such that
$\mathcal{F}_{k}(\varphi_{j})=m_{j}$.\\
It follows that for $f\in\mathcal{S}(\mathbb{R}^d)$,
 $$\mathbf{S}_{r}(f)(x)=\sum_{j=0}^{+\infty}f\ast_{k}\varphi_{j}.$$
In fact, this can be done because from (\ref{ps}), we have
\begin{eqnarray*}
\mathcal{F}_{k}(\sigma)(y)=\sum_{j=0}^{+\infty}\mathcal{F}_{k}(\sigma)(y)\psi_{j}(y)
=\sum_{j=0}^{+\infty}\mathcal{F}_{k}(\varphi_{j})(y)
\end{eqnarray*}
and we can write
\begin{eqnarray*}
\mathcal{F}_{k}(\mathbf{S}_{r}(f))(y)=\sum_{j=0}^{+\infty}\mathcal{F}_{k}(f\ast_{k}\varphi_{j})(y),
\end{eqnarray*}
which implies with the inversion formula that
 \begin{eqnarray*}
\mathbf{S}_{r}(f)(y)=\mathcal{F}^{-1}_{k}\Big(\sum_{j=0}^{+\infty}\mathcal{F}_{k}(f\ast_{k}\varphi_{j})\Big)(y)=
\int_{\mathbb{R}^d}\sum_{j=0}^{+\infty}\mathcal{F}_{k}(f\ast_{k}\varphi_{j})(z) E_k( iy, z)d\nu_k(z).
\end{eqnarray*}
To  interchange the sum and the integral, we proceed as follows: \\
From (\ref{ast}), we have\\
$\displaystyle{\sum_{j=1}^{+\infty}\Big(\int_{\mathbb{R}^d}\Big|\mathcal{F}_{k}(f\ast_{k}\varphi_{j})(z) E_k( iy, z)\Big|d\nu_k(z)\Big)}$
\begin{eqnarray*}
&\leq&\sum_{j=1}^{+\infty}\Big(\int_{\mathbb{R}^d}|\mathcal{F}_{k}(\varphi_{j})(z)||\mathcal{F}_{k}(f)(z)|d\nu_k(z)\Big)\\
&\leq&\sum_{j=1}^{+\infty}\Big(\int_{\mathbb{R}^d}|\psi_{j}(z)||\mathcal{F}_{k}(f)(z)|d\nu_k(z)\Big)\\&\leq&
\sum_{j=1}^{+\infty}\Big(\int_{\|z\|\leq 2^{j-1}t}|\psi_{j}(z)||\mathcal{F}_{k}(f)(z)|d\nu_k(z)+
\int_{\|z\|\geq 2^{j-1}t}|\psi_{j}(z)||\mathcal{F}_{k}(f)(z)|d\nu_k(z)\Big).
\end{eqnarray*}
By using (\ref{ps2}) and (\ref{ps1}), we obtain
\begin{eqnarray*}
\int_{\|y\|\leq 2^{j-1}t}|\psi_{j}(z)||\mathcal{F}_{k}(f)(z)|d\nu_k(z)&\leq&
 c2^{-(j-1)\frac{2\gamma +d}{2}}\int_{\|y\|\leq 2^{j-1}t}\|z\|^{\frac{2\gamma +d}{2}}|\mathcal{F}_{k}(f)(z)|d\nu_k(z)\\&\leq&
 c2^{-(j-1)\frac{2\gamma +d}{2}}\int_{\mathbb{R}^d}\|z\|^{\frac{2\gamma +d}{2}}|\mathcal{F}_{k}(f)(z)|d\nu_k(z),
\end{eqnarray*}
and
\begin{eqnarray*}
\int_{\|z\|\geq 2^{j-1}t}|\psi_{j}(z)||\mathcal{F}_{k}(f)(z)|d\nu_k(z)&=&\int_{\frac{\|z\|}{2^{j-1}t}\geq 1}|\psi_{j}(z)||\mathcal{F}_{k}(f)(z)|d\nu_k(z)\\
&\leq&2^{-(j-1)}t\int_{\mathbb{R}^d}\|z\||\psi_{j}(z)||\mathcal{F}_{k}(f)(z)|d\nu_k(z)\\
&\leq&2^{-(j-1)}t\int_{\mathbb{R}^d}\|z\||\mathcal{F}_{k}(f)(z)|d\nu_k(z),
\end{eqnarray*}
therefore  $\displaystyle{\sum_{j=1}^{+\infty}\Big(\int_{\mathbb{R}^d}\Big|\mathcal{F}_{k}(f\ast_{k}\varphi_{j})(z) E_k( iy, z)\Big|d\nu_k(z)\Big)}$
converge.\\
For all $j\geq0$ and $r>0$, we define  the function
$\varphi_{j,r}(x)=r^{-2\gamma -d}\varphi_{j}(\frac{x}{r})$. Then we
can write,
$$\mathbf{S}_{r}(f)(x)=\sum_{j=0}^{+\infty}f\ast_{k}\varphi_{j,r}.$$
Hence for  $f\in\mathcal{S}(\mathbb{R}^d)$, we obtain
\begin{eqnarray}\label{princ}M(f)\leq\sum_{j=0}^{+\infty}M_{\varphi_{j}}(f),\end{eqnarray}
where $M_{\varphi_{j}}(f)(x)=\displaystyle\sup_{r>0}|f\ast_{k} \varphi_{j,r}(x)|
=\sup_{r>0}\left|\int_{\mathbb{R}^d}\tau_{x}(f)(y) \varphi_{j,r}(y)d\nu_{k}(y)\right|.$\\
To prove the theorem, it suffices to establish an inequality of the form
$$\|M_{\varphi_{j}}(f)\|_{p,k}\leq C_{j,p}\|f\|_{p,k},$$
with $\displaystyle\sum_{j=0}^{+\infty} C_{j,p}<+\infty$.
\begin{lemma}
There exists a constant $C>0$ such that, for any $x\in\mathbb{R}^d$ and $j\geq0$,
\begin{eqnarray}\label{lem}|\varphi_{j}(x)|\leq C \frac{2^{j}}{(1+\|x\|)^{2\gamma+d+1}}.\end{eqnarray}
\end{lemma}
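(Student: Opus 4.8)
The plan is to exploit that $\varphi_j$ is radial and to reduce (\ref{lem}) to a one-dimensional oscillatory integral. Since $m_j=\mathcal{F}_k(\sigma)\,\psi_j$ is a product of radial functions, $\varphi_j=\mathcal{F}_k^{-1}(m_j)$ is radial, say $\varphi_j(x)=\Phi_j(\|x\|)$, and the inverse Dunkl transform of a radial function is a Hankel transform of order $\lambda=\frac{2\gamma+d}{2}-1$ (with $2\lambda+1=2\gamma+d-1$), the identity (\ref{sig}) being itself an instance. Writing $s=\|x\|$ and denoting by $\Psi_j$ the radial profile of $\psi_j$, one is led to an expression of the form
\begin{equation*}
\Phi_j(s)=C\int_0^{+\infty} j_\lambda(r)\,\Psi_j(r)\,j_\lambda(rs)\,r^{2\lambda+1}\,dr ,
\end{equation*}
where $j_\lambda$ is the normalized Bessel function appearing in (\ref{sig}). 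I would first record that placing absolute values inside gives a bound of size $2^{j(2\gamma+d+1)/2}$, which is far larger than the factor $2^{j}$ in (\ref{lem}); the gain must therefore come from the oscillation carried simultaneously by $j_\lambda(r)$ (quantified by (\ref{sigm})--(\ref{par})) and by the kernel $j_\lambda(rs)$.

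The core is a stationary-phase argument. Using the large-argument Bessel asymptotics, which give $j_\lambda(r)$ the form $r^{-(2\gamma+d-1)/2}e^{\pm i r}$ plus lower order, and similarly $j_\lambda(rs)\sim (rs)^{-(2\gamma+d-1)/2}e^{\pm i rs}$, the powers of $r$ cancel against $r^{2\lambda+1}$ and one is reduced to integrals of the schematic shape $s^{-(2\gamma+d-1)/2}\int_0^{+\infty}\Psi_j(r)\,e^{\,i r(s\pm 1)}\,dr$, up to slowly varying amplitudes and the lower-order tails of the expansions. By (\ref{ps2}) the profile satisfies $|\partial_r^{m}\Psi_j(r)|\le C_m\,2^{-jm}$ on its support $r\sim 2^{j}$, so repeated integration by parts against the non-stationary phases --- namely $r(s+1)$ for every $s>0$, and $r(s-1)$ as soon as $|s-1|\ge c\,2^{-j}$ --- yields, for every $M$, a bound $|\Phi_j(s)|\le C_M\,2^{j}(1+2^{j}|s-1|)^{-M}$ near $s\sim1$ and rapid decay $|\Phi_j(s)|\le C_M\,2^{j}s^{-M}$ for $s\ge 2$. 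Taking $M\ge 2\gamma+d+1$ disposes of the regime $\|x\|\ge 1$.

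Two regimes then remain. At the stationary point $s=1$ no integration by parts is available, and estimating directly by $\int|\Psi_j|\,dr\sim 2^{j}$ produces exactly the peak value $2^{j}$; this is the worst case and it matches the $L^2$ heuristic $\|\varphi_j\|_{2,k}\sim 2^{j/2}$ on a shell of width $2^{-j}$. For small $s$ --- in particular $s\le 2^{-j}$, where $rs\le 1$ on the support and $j_\lambda(rs)$ no longer oscillates --- I would not use the kernel, but note that $j_\lambda(r)$ still oscillates like $e^{\pm i r}$; since $\psi_j(0)=0$ for $j\ge1$ by (\ref{ps2}) and $\Psi_j$ lives at $r\sim2^{j}$, integration by parts against the phase $r$ alone shows that $\int j_\lambda(r)\Psi_j(r)\,r^{2\lambda+1}\,dr$ is $O(2^{-jN})$ for every $N$, so $|\Phi_j(s)|\le C\,2^{j}$ there as well. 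Since $(1+\|x\|)^{-(2\gamma+d+1)}\ge c$ for $\|x\|\le1$, combining the two regimes gives (\ref{lem}).

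The hard part will be making this oscillatory analysis rigorous and uniform in $j$. Concretely, one must control the error terms in the Bessel expansions of $j_\lambda(r)$ and $j_\lambda(rs)$ together with their radial derivatives --- of which (\ref{sigm})--(\ref{par}) are only the first instances --- perform the integrations by parts with constants independent of $j$ and $s$, and handle the transition zone $rs\sim1$ where neither the oscillatory nor the power-series regime is clean. The normalizations (\ref{psi}) and the vanishing (\ref{ps1}) of $\psi_1$ near the origin are precisely what pin the $j$-dependence to the single power $2^{j}$ rather than the larger $2^{j(2\gamma+d+1)/2}$ of the trivial bound.
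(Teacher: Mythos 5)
Your route is genuinely different from the paper's and, in outline, it is the classical frequency-side argument: reduce to a Hankel transform of order $\lambda=\gamma+\frac{d}{2}-1$, insert large-argument Bessel asymptotics, and run stationary phase. The numerology in your sketch checks out (the trivial bound $2^{j(2\gamma+d+1)/2}$, the peak value $2^{j}$ at $\|x\|=1$, the $L^2$ consistency check), and the estimates you claim are in fact stronger than (\ref{lem}), since they capture the concentration of $\varphi_j$ in a shell of width $2^{-j}$ about the unit sphere. The paper does something much softer and entirely in physical space: since $\mathcal{F}_k(\varphi_j)=\mathcal{F}_k(\sigma)\psi_j$, it writes $\varphi_j$ as the spherical mean (at radius $1$) of $\mathcal{F}_k^{-1}(\psi_j)$, uses the dilation identity $\mathcal{F}_k^{-1}(\psi_j)(x)=2^{(j-1)(2\gamma+d)}\mathcal{F}_k^{-1}(\psi_1)(2^{j-1}x)$ together with the bound $|\mathcal{F}_k^{-1}(\psi_1)(x)|\le C(1+\|x\|)^{-(2\gamma+d+1)}$ (which is free, because $\mathcal{F}_k^{-1}(\psi_1)$ is $\mathcal{C}^\infty$ with compact support, hence Schwartz), and then controls the Dunkl translations by R\"osler's formula (\ref{tra}) and the geometric bound (\ref{min}), reducing everything to $|\varphi_j(x)|\le C2^{j(2\gamma+d)}\int_{\mathbb{S}^{d-1}}\bigl(1+2^{j}\min_{g\in G}\|gx-y\|\bigr)^{-(2\gamma+d+1)}\omega_k(y)\,d\sigma(y)$; the two cases $\|x\|>2$ and $\|x\|\le 2$ are then finished by elementary geometry and a dyadic cap-measure estimate on the sphere. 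Thus the paper needs no Bessel asymptotics and no oscillation at all for this lemma (the decay (\ref{sigm})--(\ref{par}) enters only in the $L^2$ lemma); the price is a bound that ignores the shell localization, which is not needed later anyway. Your approach, conversely, never touches the Dunkl translation machinery, which is a genuine structural difference.

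The caveat is that, as written, your text is a program rather than a proof: in your approach the entire content of the lemma sits in the uniform-in-$j$ stationary-phase estimates that you explicitly defer. One concrete imprecision you would have to repair: $\Psi_j$ is \emph{not} supported in $r\sim 2^{j}$. Since $\psi_0$ is the Dunkl transform of a compactly supported function, it is (by Paley--Wiener) real-analytic, so $\psi_1$ and $\psi_j$ are Schwartz but never compactly supported; the localization at $r\sim 2^{j}$ holds only in the weak sense of (\ref{ps1}) near the origin plus rapid decay at infinity. Consequently every integration by parts must carry Schwartz tails, and the region $r\lesssim 1$, where the large-argument Bessel expansion is unavailable, must be treated separately by size (this is where the vanishing conditions (\ref{psi}) and (\ref{ps1}) are used, exactly as you suggest). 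None of this breaks your plan, but it is precisely the technical work that the paper's representation of $\varphi_j$ as a spherical mean of a rescaled Schwartz function renders unnecessary.
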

\begin{proof}
Remember that $\varphi_{j}=\mathbf{S}_{r}(\mathcal{F}^{-1}_{k}(\psi_{j}))$ for $j\geq1$. We have,
$$\mathcal{F}^{-1}_{k}(\psi_{j})(x)=2^{(j-1)(2\gamma+d)}\mathcal{F}^{-1}_{k}(\psi_{1})(2^{j-1}x).$$
Since $\mathcal{F}^{-1}_{k}(\psi_{0})$ and $\mathcal{F}^{-1}_{k}(\psi_{1})$ are in $\mathcal{S}(\mathbb{R}^d)$, then
\begin{eqnarray}\label{varj}|\mathcal{F}^{-1}_{k}(\psi_{0})(x)|\; \mbox{and} \;|\mathcal{F}^{-1}_{k}(\psi_{1})(x)|
\;\;\mbox{are bounded
by}\;\;\frac{C}{(1+\|x\|)^{2\gamma+d+1}}.\end{eqnarray} Taking
$\phi_{j}(x,y,\xi)=\mathcal{F}^{-1}_{k}(\psi_{1})(2^{j-1}\sqrt{\|x\|^2+\|y\|^2-2<y,\xi>})$
for $j\geq1$ and using (\ref{tra}), we get
\begin{eqnarray}\label{tau}
|\varphi_{j}(x)|&=&\Big|\int_{\mathbb{S}^{d-1}}\tau_{x}(\mathcal{F}^{-1}_{k}(\psi_{j}))(-y) \omega_k(y)d\sigma(y)\Big|\nonumber
\\&\leq&2^{(j-1)(2\gamma+d)}\int_{\mathbb{S}^{d-1}}\Big(\int_{\mathbb{R}^d}\Big|\phi_{j}(x,y,\xi)\Big|
d\mu_{x}(\xi)\Big)\omega_k(y)d\sigma(y).\nonumber\\
\end{eqnarray}
For $j=0$, we have by (\ref{tra})
\begin{eqnarray}\label{tau1}
|\varphi_{0}(x)|&=&\Big|\int_{\mathbb{S}^{d-1}}\tau_{x}(\mathcal{F}^{-1}_{k}(\psi_{0}))(-y) \omega_k(y)d\sigma(y)\Big|\nonumber
\\&\leq&\int_{\mathbb{S}^{d-1}}\Big(\int_{\mathbb{R}^d}\Big|\mathcal{F}^{-1}_{k}(\psi_{0})(\sqrt{\|x\|^2+\|y\|^2-2<y,\xi>})\Big|
d\mu_{x}(\xi)\Big)\omega_k(y)d\sigma(y).\nonumber\\
\end{eqnarray}
From (\ref{min}), (\ref{varj}), (\ref{tau}) and (\ref{tau1}), we get
for $j\geq 0$
\\$|\varphi_{j}(x)|$
\begin{eqnarray*}
&\leq&C2^{j(2\gamma+d)}\int_{\mathbb{S}^{d-1}}\Big(\int_{\mathbb{R}^d}
\frac{1}{(1+2^{j}\displaystyle\min_{g\in G}\|gx-y\|)^{2\gamma+d+1}}d\mu_{x}(\xi)\Big)\omega_k(y)d\sigma(y)
\\&\leq&C2^{j(2\gamma+d)}\int_{\mathbb{S}^{d-1}}\frac{1}{(1+2^{j}\displaystyle\min_{g\in G}\|gx-y\|)^{2\gamma+d+1}}\omega_k(y)d\sigma(y).
\end{eqnarray*}
If $\|x\|>2$, then $\|gx-y\|\geq \|x\|-1\geq\frac{\|x\|}{2}$ and we have
\begin{eqnarray}\label{var}
|\varphi_{j}(x)|&\leq& \frac{C2^{j(2\gamma+d)}}{(1+2^{j-1}\|x\|)^{2\gamma+d+1}}\nonumber\\&\leq& \frac{C2^{-j}}{\|x\|^{2\gamma+d+1}}\nonumber
\\&\leq& C \frac{2^{j}}{(1+\|x\|)^{2\gamma+d+1}}.
\end{eqnarray}
If $\|x\|\leq2$ then,
\begin{eqnarray}\label{var1}
|\varphi_{j}(x)|&\leq&\int_{\mathbb{S}^{d-1}}\frac{2^{j(2\gamma+d)}}{(1+2^{j}\displaystyle\min_{g\in G}\|gx-y\|)^{2\gamma+d+1}}\omega_k(y)d\sigma(y)\nonumber\\&\leq&
2^{j(2\gamma+d)}\int_{\displaystyle\{y\in\mathbb{S}^{d-1}; \min_{g\in G}\|gx-y\|\leq 2^{-j}\}}\omega_k(y)d\sigma(y)\nonumber\\&+&
2^{j(2\gamma+d)}\sum_{i=0}^{+\infty}2^{-(2\gamma+d+1)i}\int_{\displaystyle\{y\in\mathbb{S}^{d-1}; \min_{g\in G}\|gx-y\|\leq 2^{i+1-j}\}}\omega_k(y)d\sigma(y)
\nonumber\\&\leq&
C\Big(2^{j}+ 2^{j}\sum_{i=0}^{+\infty}2^{-2i}\Big)\leq
C\frac{2^{j}}{(1+\|x\|)^{2\gamma+d+1}}.
\end{eqnarray}
From (\ref{var}) and (\ref{var1}), we obtain \begin{eqnarray*}|\varphi_{j}(x)|\leq C \frac{2^{j}}{(1+\|x\|)^{2\gamma+d+1}}.\end{eqnarray*}
\end{proof}
\begin{lemma}
There exists a constant $C>0$ such that, for any $f\in\mathcal{S}(\mathbb{R}^d)$, $j\geq0$ and $\alpha>0$,
$$\int_{\widetilde{E}(\alpha)}d\nu_{k}(x)\leq\frac{ C2^{j}}{\alpha}\|f\|_{1,k}.$$
where $\widetilde{E}(\alpha)=\{x\in\mathbb{R}^d;
M_{\varphi_{j}}(f)(x)>\alpha\}$ and $C$ is a constant depending only
on $d$.
\end{lemma}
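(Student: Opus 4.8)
The plan is to reduce the weak-type estimate to the corresponding property \eqref{fai} of the Hardy--Littlewood-type maximal operator $\mathcal{M}_{k}$ from \eqref{max}. Concretely, I aim to prove the pointwise domination
\[
M_{\varphi_{j}}(f)(x)\le C\,2^{j}\,\mathcal{M}_{k}(|f|)(x),\qquad x\in\mathbb{R}^{d},
\]
with $C$ independent of $j$, $f$ and $x$. Once this is established the lemma follows at once: if $M_{\varphi_{j}}(f)(x)>\alpha$ then $\mathcal{M}_{k}(|f|)(x)>\alpha/(C2^{j})$, so applying \eqref{fai} to $|f|$ gives $\int_{\widetilde E(\alpha)}d\nu_{k}\le \frac{C2^{j}}{\alpha}\,\|\,|f|\,\|_{1,k}=\frac{C'2^{j}}{\alpha}\|f\|_{1,k}$, which is the desired inequality.

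To obtain the pointwise domination I would proceed in three steps. First, set $\Phi(s)=(1+s)^{-(2\gamma+d+1)}$ and let $\Phi_{r}(z)=r^{-(2\gamma+d)}\Phi(\|z\|/r)$ be its $L^{1}_{k}$-normalized dilate; by Lemma~\eqref{lem} one has $|\varphi_{j,r}(z)|\le C2^{j}\Phi_{r}(z)$, and since $2\gamma+d+1>2\gamma+d$ the profile $\Phi$ is radial, decreasing and integrable, with the scale-invariant norm $\|\Phi_{r}\|_{1,k}=\|\Phi\|_{1,k}=:A_{0}<+\infty$. Second, rather than estimating $\tau_{x}f$ directly I transfer the translation onto the radial kernel by the self-adjointness \eqref{3prmi}, writing $f\ast_{k}\varphi_{j,r}(x)=\int_{\mathbb{R}^{d}}f(y)\,\tau_{x}(\varphi_{j,r})(y)\,d\nu_{k}(y)$. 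Because $\varphi_{j,r}$ is radial, the R\"osler representation \eqref{tra} gives $|\tau_{x}(\varphi_{j,r})(y)|\le C2^{j}\,\tau_{x}(\Phi_{r})(y)$ with $\tau_{x}(\Phi_{r})\ge0$, whence $|f\ast_{k}\varphi_{j,r}(x)|\le C2^{j}\int_{\mathbb{R}^{d}}|f(y)|\,\tau_{x}(\Phi_{r})(y)\,d\nu_{k}(y)$. Third, I decompose the radial decreasing majorant into a nonnegative superposition of balls, $\Phi_{r}(z)=\int_{0}^{+\infty}\mathbf{1}_{B(0,\rho)}(z)\,d\lambda_{r}(\rho)$ with $d\lambda_{r}\ge0$; interchanging $\tau_{x}$ with this integral and using \eqref{3prmi} once more turns the inner integral into the ball averages $\int_{B(0,\rho)}\tau_{x}(|f|)\,d\nu_{k}$, each of which is nonnegative and bounded by $\nu_{k}(B(0,\rho))\,\mathcal{M}_{k}(|f|)(x)$. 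Since $\int_{0}^{+\infty}\nu_{k}(B(0,\rho))\,d\lambda_{r}(\rho)=\|\Phi_{r}\|_{1,k}=A_{0}$, taking the supremum over $r>0$ yields exactly the claimed pointwise bound.

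The main obstacle is that the Dunkl translation is not positivity preserving, so one cannot simply write $|\tau_{x}f|\le\tau_{x}(|f|)$ and feed it into $\mathcal{M}_{k}$. The device that circumvents this is precisely the transfer of $\tau_{x}$ onto the \emph{radial} kernel via \eqref{3prmi} together with \eqref{tra}: the translate of a radial function is an average of its profile over the probability measure $\mu_{x}$, so its modulus is dominated by the translate of the nonnegative radial majorant $\Phi_{r}$, and both $\Phi_{r}$ and $\mathbf{1}_{B(0,\rho)}$ do have nonnegative Dunkl translates. The remaining points are routine: the scale invariance and finiteness of $\|\Phi_{r}\|_{1,k}$ follow from the homogeneity of $w_{k}$ and the decay of $\Phi$; the interchange of $\tau_{x}$ with the layer-cake integral and the applications of Fubini--Tonelli are justified by nonnegativity; and the validity of \eqref{tra} for the non-Schwartz functions $\Phi_{r}$ and $\mathbf{1}_{B(0,\rho)}$ is secured by approximating them from above by radial Schwartz functions and passing to the limit.
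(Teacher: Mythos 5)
Your proposal is correct and follows essentially the same route as the paper: both reduce the lemma to the pointwise domination $M_{\varphi_{j}}(f)(x)\leq C2^{j}\mathcal{M}_{k}(|f|)(x)$ --- obtained by combining the decay bound (\ref{lem}), the positivity of the Dunkl translation on nonnegative radial functions coming from (\ref{tra}), and the self-adjointness identity (\ref{3prmi}) --- and then invoke the weak type $(1,1)$ estimate (\ref{fai}) for $\mathcal{M}_{k}$. The only difference is cosmetic: the paper discretizes the radial decreasing majorant into dyadic annuli $A_{i}=\{y:\, r2^{i}\leq\|y\|<r2^{i+1}\}$ and sums the resulting series, whereas you use the continuous layer-cake decomposition into balls; both are implementations of the same standard majorization argument.
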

\begin{proof}
Let first prove that, for $f\in\mathcal{S}(\mathbb{R}^d)$ and
$x\in\mathbb{R}^d$,
\begin{eqnarray}\label{varp}\int_{\mathbb{R}^d}\tau_x(|\varphi_{j,r}|)(y)|f(y)|d\nu_{k}(y)\leq C 2^{j}\mathcal{M}_{k}(|f|)(x)\quad j\geq0,\end{eqnarray}
where $\mathcal{M}_{k}(f)$ is given by (\ref{max}).\\ We denote by
$\displaystyle A_i=\{y\in\mathbb{R}^d, \; r 2^i\leq
\|y\|<r2^{i+1}\}$, then\\
$\displaystyle\tau_x(|\varphi_{j,r}|)(y)=\tau_x(\sum_{i=-\infty}^{+\infty}|\varphi_{j,r}|.\chi_{A_i})(y)=\sum_{i=-\infty}^{+\infty}\tau_x(|\varphi_{j,r}|.\chi_{A_i})(y).$
\\From (\ref{lem}), one has
\begin{eqnarray*}
(|\varphi_{j,r}|.\chi_{A_i})(y)&\leq& C2^{j}\frac{r^{-(2\gamma+d)}}{ (1+\frac{\|y\|}{r})^{2\gamma+d+1}}\chi_{A_i}(y)
\\&\leq&C2^{j}\frac{r^{-(2\gamma+d)}}{
(1+2^i)^{2\gamma+d+1}}\chi_{A_i}(y),
\end{eqnarray*}
and since  $|\varphi_{j,r}|.\chi_{A_i}$ is a radial function, this
implies that
$$ \tau_x(|\varphi_{j,r}|.\chi_{A_i})(y)\leq
C2^{j}\frac{r^{-(2\gamma+d)}}{
(1+2^i)^{2\gamma+d+1}}\tau_x(\chi_{A_i})(y).$$
Using (\ref{3prmi}), we obtain\\
$\displaystyle\int_{\mathbb{R}^d}\tau_x(|\varphi_{j,r}|)(y)|f(y)|d\nu_{k}(y)$
\begin{eqnarray*}&\leq& C2^{j}\int_{\mathbb{R}^d}
\sum_{i=-\infty}^{+\infty}\frac{r^{-(2\gamma+d)}}{
(1+2^i)^{2\gamma+d+1}}\tau_x(\chi_{A_i})(y)|f(y)|d\nu_{k}(y)
\\&\leq& C2^{j}
\sum_{i=-\infty}^{+\infty}\frac{r^{-(2\gamma+d)}}{ (1+2^i)^{2\gamma+d+1}}\int_{\mathbb{R}^d}\chi_{A_i}(y)\tau_x(|f|)(y)d\nu_{k}(y)
\\&\leq& C2^{j}
\sum_{i=-\infty}^{+\infty}\frac{r^{-(2\gamma+d)}}{ (1+2^i)^{2\gamma+d+1}}\int_{B(0, r2^{i+1})}\tau_x(|f|)(y)d\nu_{k}(y)
\\&\leq& C2^{j}
\sum_{i=-\infty}^{+\infty}\frac{r^{-(2\gamma+d)}}{ (1+2^i)^{2\gamma+d+1}}(r2^{i+1})^{2\gamma+d}\mathcal{M}_{k}(|f|)(x)
\\&\leq& C2^{j}\mathcal{M}_{k}(|f|)(x).
\end{eqnarray*}
By the fact that,
\begin{eqnarray*}\left|\int_{\mathbb{R}^d}\tau_x(\varphi_{j,r})(y)f(y)d\nu_{k}(y)\right|
&\leq& \int_{\mathbb{R}^d}|\tau_x(\varphi_{j,r})(y)||f(y)|d\nu_{k}(y)\\&\leq& \int_{\mathbb{R}^d}\tau_x(|\varphi_{j,r}|)(y)|f(y)|d\nu_{k}(y),
\end{eqnarray*}
we deduce \\
 $$M_{\varphi_{j}}(f)(x)\leq C2^{j}\mathcal{M}_{k}(|f|)(x).$$
From (\ref{fai}), we conclude the proof of the lemma.
\end{proof}
\begin{lemma}
There exists a constant $C>0$ such that, for any $f\in\mathcal{S}(\mathbb{R}^d)$, $j\geq1$,
$$\|M_{\varphi_{j}}(f)\|_{2,k}\leq C2^{-j\frac{2\gamma+d-2}{2}}\|f\|_{2,k}.$$
\end{lemma}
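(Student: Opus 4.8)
The plan is to dominate the maximal operator $M_{\varphi_j}$ by a Littlewood--Paley type square function and then pass to the Dunkl transform side, where Plancherel's theorem (property iv) together with the decay estimates (\ref{sigm}) and (\ref{par}) do the work. The first preliminary step is the scaling identity for $\varphi_{j,r}$. Since $w_k$ is homogeneous of degree $2\gamma$ and $\varphi_{j,r}(x)=r^{-2\gamma-d}\varphi_{j}(x/r)$, a change of variables in the defining integral of $\mathcal{F}_k$ gives $\mathcal{F}_k(\varphi_{j,r})(\xi)=m_j(r\xi)=\mathcal{F}_k(\sigma)(r\xi)\psi_j(r\xi)$. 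Both $\mathcal{F}_k(\sigma)$ and $\psi_j$ being radial, $m_j(r\xi)$ depends only on $\rho=r\|\xi\|$, and we write $\widetilde m_j(\rho)$ for its radial profile.

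Next, for fixed $x$ put $g(r)=f\ast_k\varphi_{j,r}(x)$. Since $g(r)\to0$ as $r\to+\infty$, the fundamental theorem of calculus applied to $|g|^2$ followed by Cauchy--Schwarz yields the pointwise bound
$$M_{\varphi_{j}}(f)(x)^2=\sup_{r>0}|g(r)|^2\leq 2\Big(\int_0^{+\infty}|g(r)|^2\tfrac{dr}{r}\Big)^{1/2}\Big(\int_0^{+\infty}|r\partial_r g(r)|^2\tfrac{dr}{r}\Big)^{1/2}.$$
Integrating in $x$ against $d\nu_k$, using Cauchy--Schwarz once more, and then applying Plancherel together with (\ref{ast}) turns the two factors into frequency integrals. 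The key observation is that, after the substitution $\rho=r\|\xi\|$, the inner $r$-integrals become independent of $\xi$, so that
$$\int_{\mathbb{R}^d}\int_0^{+\infty}|g(r)|^2\tfrac{dr}{r}\,d\nu_k(x)=A_j\|f\|_{2,k}^2,\qquad A_j=\int_0^{+\infty}|\widetilde m_j(\rho)|^2\tfrac{d\rho}{\rho},$$
and likewise the derivative factor equals $B_j\|f\|_{2,k}^2$ with $B_j=\int_0^{+\infty}|\rho\,\widetilde m_j'(\rho)|^2\tfrac{d\rho}{\rho}$. Hence $\|M_{\varphi_{j}}(f)\|_{2,k}^2\leq 2\,(A_jB_j)^{1/2}\|f\|_{2,k}^2$, and it remains to prove $A_j\leq C2^{-(j-1)(2\gamma+d-1)}$ and $B_j\leq C2^{-(j-1)(2\gamma+d-3)}$, whose geometric mean is exactly $2^{-(j-1)(2\gamma+d-2)}$.

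To estimate $A_j$ I substitute $u=2^{-(j-1)}\rho$ and use $\psi_j(y)=\psi_1(2^{-(j-1)}y)$, which gives $A_j=\int_0^{+\infty}|\mathcal{F}_k(\sigma)(2^{j-1}u)|^2|\widetilde\psi_1(u)|^2\tfrac{du}{u}$. From (\ref{sigm}) one has the global bound $|\mathcal{F}_k(\sigma)(s)|\leq C(1+s)^{-(2\gamma+d-1)/2}$; splitting the integral at $u=2^{-(j-1)}$ and invoking the vanishing (\ref{ps1}), namely $|\widetilde\psi_1(u)|\leq C u^{(2\gamma+d)/2}$ near $0$ (which guarantees convergence at the origin) together with the Schwartz decay of $\psi_1$ at infinity, extracts the factor $2^{-(j-1)(2\gamma+d-1)}$. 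For $B_j$ one writes $\widetilde m_j'=a'b_j+ab_j'$ with $a=\mathcal{F}_k(\sigma)$ and $b_j(\rho)=\widetilde\psi_1(2^{-(j-1)}\rho)$; the estimate (\ref{par}) gives $|\rho\,a'(\rho)|\leq C\rho^{-(2\gamma+d-3)/2}$, and the same change of variables and splitting show that the term coming from $a'b_j$ is dominant and produces $2^{-(j-1)(2\gamma+d-3)}$, while the $ab_j'$ term is of smaller order $2^{-(j-1)(2\gamma+d-1)}$.

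Combining the two estimates gives $(A_jB_j)^{1/2}\leq C2^{-(j-1)(2\gamma+d-2)}$, and after taking square roots the extraneous constant $2^{(2\gamma+d-2)/2}$ is absorbed into $C$, yielding $\|M_{\varphi_{j}}(f)\|_{2,k}\leq C2^{-j(2\gamma+d-2)/2}\|f\|_{2,k}$. I expect the main obstacle to be the bookkeeping in the two frequency integrals $A_j$ and $B_j$: one must carefully track the interplay between the polynomial vanishing of $\psi_1$ at the origin and the decay of the Bessel-type profile $\mathcal{F}_k(\sigma)$ (and its derivative) at infinity, both to ensure that every $u$-integral converges uniformly in $j$ and to verify that the powers of $2^{-j}$ combine to precisely the exponent $(2\gamma+d-2)/2$.
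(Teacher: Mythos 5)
Your proof is correct and follows essentially the same route as the paper's: it dominates $M_{\varphi_j}$ by the geometric mean of the two Littlewood--Paley square functions (via the fundamental theorem of calculus and Cauchy--Schwarz, applied twice), then passes to the Dunkl transform side with Plancherel and uses the decay (\ref{sigm}), (\ref{par}) of $\mathcal{F}_k(\sigma)$ together with the vanishing (\ref{ps1}) of $\psi_1$ at the origin. In fact your bookkeeping for the derivative square function, $B_j\leq C2^{-(j-1)(2\gamma+d-3)}$, is the correct one: the paper's displayed bound (\ref{gtil}) asserts the exponent $-j(2\gamma+d-1)/2$ for $\tilde{g_j}$, which appears to be a typo, since only your exponent makes the geometric mean come out to the claimed $2^{-j(2\gamma+d-2)/2}$.
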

\begin{proof}
(See \cite{S2}, \cite{Z})
For  $f\in \mathcal{S}(\mathbb{R}^d)$, we have from (\ref{ast})
 $$\mathcal{F}_{k}(f\ast_{k}\varphi_{j,r})(x)=\mathcal{F}_{k}(f)(x)\mathcal{F}_{k}(\varphi_{j,r})(x)=\mathcal{F}_{k}(f)(x)m_{j}(rx).$$ Put
 $$g_{j}(f)(x)=\Big(\int_{0}^{+\infty}|f\ast_{k}\varphi_{j,r}(x)|^{2}\frac{dr}{r}\Big)^{\frac{1}{2}},$$
  the Littlewood-Paley function associated to the function $\varphi_{j,r}$.\\
Using the Plancherel theorem and by (\ref{ast}), we obtain
\begin{eqnarray*}
\|g_{j}(f)\|_{2,k}^{2}&=&\int_{\mathbb{R}^d}|g_{j}(f)(x)|^{2}d\nu_{k}(x)\\&=&
\int_{\mathbb{R}^d}\Big(\int_{0}^{+\infty}|f\ast_{k}\varphi_{j,r}(x)|^{2}\frac{dr}{r}\Big)d\nu_{k}(x)\\&=&
\int_{0}^{+\infty}\Big(\int_{\mathbb{R}^d}|\mathcal{F}_{k}(f\ast_{k}\varphi_{j,r})(x)|^{2}d\nu_{k}(x)\Big)\frac{dr}{r}\\&=&
\int_{0}^{+\infty}\Big(\int_{\mathbb{R}^d}|\mathcal{F}_{k}(f)(x)|^{2}|m_{j}(rx)|^{2}d\nu_{k}(x)\Big)\frac{dr}{r}\\&\leq&
\|f\|_{2,k}^{2}\sup_{x\neq0}\int_{0}^{+\infty}|m_{j}(rx)|^{2}\frac{dr}{r}.
\end{eqnarray*}
Since the function $m_{j}$ is radial, the integral is independent of $x$.\\
By using the definition of $m_{j}$, we have that
\begin{eqnarray*}
\int_{0}^{+\infty}|m_{j}(rx)|^{2}\frac{dr}{r}&=&\int_{0}^{+\infty}|\mathcal{F}_{k}(\sigma)(rx)|^{2}|\psi_{j}(rx)|^{2}\frac{dr}{r}
\\&=&\int_{0}^{+\infty}|\mathcal{F}_{k}(\sigma)(rx)|^{2}|\psi_{1}(2^{-j}rx)|^{2}\frac{dr}{r}
\\&=&\int_{0}^{+\infty}|\mathcal{F}_{k}(\sigma)(2^{j}rx)|^{2}|\psi_{1}(rx)|^{2}\frac{dr}{r}.
\end{eqnarray*}
From (\ref{sig}), (\ref{sigm}) and (\ref{ps1}), we obtain
\begin{eqnarray*}
\int_{0}^{+\infty}|m_{j}(rx)|^{2}\frac{dr}{r}&\leq& C 2^{-j(2\gamma+d-1)}\int_{0}^{+\infty}\frac{|\psi_{1}(rx)|^{2}}{r^{2\gamma+d-1}}\frac{dr}{r}
\\&\leq& C 2^{-j(2\gamma+d-1)},
\end{eqnarray*}
this gives
\begin{eqnarray}\label{ggma}\|g_{j}(f)\|_{2,k}\leq C 2^{-\frac{j(2\gamma+d-1)}{2}}\|f\|_{2}.\end{eqnarray}
Put now $\tilde{\varphi}_{j,r}(x)=r\frac{d}{dr}\varphi_{j,r}(x)$ and
$\tilde{g_{j}}(f)$ the Littlewood-Paley function associated to
$\tilde{\varphi}_{j,r}$, then
 $$\tilde{g_{j}}(f)(x)=\Big(\int_{0}^{+\infty}|f\ast_{k}\tilde{\varphi}_{j,r}(x)|^{2}\frac{dr}{r}\Big)^{\frac{1}{2}}=
 \Big(\int_{0}^{+\infty}r|\frac{d}{dr}\varphi_{j,r}\ast_{k} f(x)|^{2}dr\Big)^{\frac{1}{2}}.$$
Similarly, with the use of  (\ref{sig}) and (\ref{par}), we have
\begin{eqnarray}\label{gtil}
\|\tilde{g_{j}}(f)\|_{2,k}&=&
\int_{\mathbb{R}^d}\Big(\int_{0}^{+\infty}r|\frac{d}{dr}\varphi_{j,r}\ast_{k}
f(x)|^{2}dr\Big)d\nu_{k}(x)\nonumber\\&=& C
2^{-j\frac{(2\gamma+d-1)}{2}}\|f\|_{2,k}.
\end{eqnarray}
Since $\displaystyle\lim_{r\rightarrow+\infty}f\ast_{k} \varphi_{j,r}(x)=0$, we have
\begin{eqnarray*}
|f\ast_{k}
\varphi_{j,r}(x)|^{2}&=&-2Re\Big(\int_{r}^{+\infty}\overline{\varphi_{j,s}\ast_{k}
f(x)}\frac{d}{ds}\varphi_{j,s}\ast_{k} f(x) ds\Big)
\\&=&-2Re\Big(\int_{r}^{+\infty}\overline{\varphi_{j,s}\ast_{k} f(x)}\tilde{\varphi}_{j,s}\ast_{k} f(x)\frac{ds}{s}\Big)
\\&\leq&2\int_{r}^{+\infty}|\overline{\varphi_{j,s}\ast_{k} f(x)}||\tilde{\varphi}_{j,s}\ast_{k} f(x)|\frac{ds}{s}.
\end{eqnarray*}
 Using Cauchy-Schwartz's inequality, we deduce that
$$\displaystyle\sup_{r>0}|f\ast_{k} \varphi_{j,r}(x)|^{2}\leq 2g_{j}(f)(x)\tilde{g_{j}}(f)(x).$$
Integrating over $\mathbb{R}^d$ and  using again the Cauchy-Schwartz
inequality, we obtain from (\ref{ggma}) and (\ref{gtil})
$$\|M_{\varphi_{j}}(f)\|_{2,k}\leq C2^{-j\frac{2\gamma+d-2}{2}}\|f\|_{2,k}.$$
\end{proof}
\begin{lemma}
There exists a constant $C>0$ such that, for $f\in\mathcal{S}(\mathbb{R}^d)$
\begin{eqnarray*}\|M_{\varphi_{j}}(f)\|_{\infty,k}\leq C2^{j}\|f\|_{\infty,k}\quad  j\geq0.\end{eqnarray*}
\end{lemma}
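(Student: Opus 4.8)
The plan is to reduce the statement to the Young-type convolution inequality recalled after (\ref{ast}) (namely $\|f\ast_k g\|_{p,k}\leq\|f\|_{p,k}\|g\|_{1,k}$ from \cite{TX}, Theorem 4.1, valid for bounded radial $g\in L^1_k(\mathbb{R}^d)$) combined with the pointwise bound of Lemma 2.1. Fix $j\geq 0$ and $r>0$. Since $\mathcal{F}_k(\varphi_j)=m_j=\mathcal{F}_k(\sigma)\psi_j$ is radial, $\varphi_j$ and hence its dilate $\varphi_{j,r}$ are bounded radial functions in $L^1_k(\mathbb{R}^d)$, so the inequality applies with $p=\infty$, giving for every $x\in\mathbb{R}^d$ and $r>0$
$$|f\ast_k\varphi_{j,r}(x)|\leq\|f\ast_k\varphi_{j,r}\|_{\infty,k}\leq\|f\|_{\infty,k}\,\|\varphi_{j,r}\|_{1,k}.$$

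First I would check that $\|\varphi_{j,r}\|_{1,k}$ is independent of $r$. Writing $\varphi_{j,r}(y)=r^{-(2\gamma+d)}\varphi_j(y/r)$ and using that $w_k$ is homogeneous of degree $2\gamma$, the substitution $z=y/r$ gives $d\nu_k(y)=r^{2\gamma+d}\,d\nu_k(z)$, whence $\|\varphi_{j,r}\|_{1,k}=\|\varphi_j\|_{1,k}$ for all $r>0$. This dilation invariance is exactly what will make the final bound uniform in $r$.

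Next I would estimate $\|\varphi_j\|_{1,k}$ via (\ref{lem}). One gets
$$\|\varphi_j\|_{1,k}\leq C2^j\int_{\mathbb{R}^d}\frac{d\nu_k(x)}{(1+\|x\|)^{2\gamma+d+1}}.$$
Passing to polar coordinates as in the introduction, the integral equals a constant times $\int_0^{+\infty}\frac{r^{2\gamma+d-1}}{(1+r)^{2\gamma+d+1}}\,dr$, which is finite: the integrand is $O(r^{2\gamma+d-1})$ near $0$ (integrable since $2\gamma+d\geq 2>0$) and $O(r^{-2})$ at infinity. Hence $\|\varphi_j\|_{1,k}\leq C2^j$ with $C$ depending only on $d$ and $\gamma$.

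Combining the three steps yields $|f\ast_k\varphi_{j,r}(x)|\leq C2^j\|f\|_{\infty,k}$ uniformly in $x$ and $r$; taking the supremum over $r>0$ and then the $L^\infty_k$ norm gives the claim. I expect no genuine obstacle here, as this is the softest of the four estimates; the only point requiring care is uniformity in $r$, which the dilation invariance of $\|\varphi_{j,r}\|_{1,k}$ supplies. As an alternative one could invoke the pointwise domination $M_{\varphi_j}(f)\leq C2^j\mathcal{M}_k(|f|)$ established in Lemma 2.2 together with the $L^\infty$-boundedness of $\mathcal{M}_k$, but the direct route above avoids appealing to the maximal theorem.
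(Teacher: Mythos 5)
Your proposal is correct and follows essentially the same route as the paper: both reduce the claim to the bound $|f\ast_{k}\varphi_{j,r}(x)|\leq\|f\|_{\infty,k}\,\|\varphi_{j,r}\|_{1,k}$ and then use Lemma 2.1 plus dilation invariance to get $\|\varphi_{j,r}\|_{1,k}=\|\varphi_{j}\|_{1,k}\leq C2^{j}$ uniformly in $r$. The only difference is cosmetic: the paper derives the convolution bound directly from (\ref{3frad}) and the positivity of the Dunkl translation on radial functions rather than citing the Young-type inequality of Thangavelu--Xu, and your treatment of the $r$-scaling is in fact more explicit than the paper's.
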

\begin{proof}
From (\ref{3frad}) and (\ref{lem}), one has for $f\in\mathcal{S}(\mathbb{R}^d)$ and $x\in\mathbb{R}^d$,
\begin{eqnarray*}
|f\ast_{k} \varphi_{j,r}(x)|&\leq&\int_{\mathbb{R}^d}|f(y)| |\tau_{x}\varphi_{j,r}(y)|d\nu_{k}(y)
\\&\leq&\|f\|_{\infty,k}\int_{\mathbb{R}^d} |\varphi_{j,r}(y)|d\nu_{k}(y)
\\&\leq&C 2^{j}\|f\|_{\infty,k}\int_{\mathbb{R}^d} \frac{1}{(1+\|x\|)^{2\gamma+d+1}}d\nu_{k}(y)
\\&\leq&C 2^{j}\|f\|_{\infty,k},
\end{eqnarray*}
which gives the result.
\end{proof}
\begin{remark}We observe that for $j=0$ and from Lemmas 2.2, 2.4, we obtain by
interpolation (see \cite{S3}) that $M_{\varphi_{j}}(f)$ is of
strong-type (p,p) with $1<p\leq +\infty$.\end{remark}
\noindent\textbf{Proof of theorem 2.1.} According to Remark 2.1,
we deduce by interpolation from Lemmas 2.2, 2.3 that
$$\|M_{\varphi_{j}}(f)\|_{p,k}\leq
C2^{-j(2\gamma+d-\frac{2\gamma+d}{p}-1)}\|f\|_{p,k},$$ for
$1<p\leq2$ and $j\geq0$.\\ Similarly, according to Remark 2.1, we
get by interpolation from Lemmas 2.3, 2.4 that
$$\|M_{\varphi_{j}}(f)\|_{p,k}\leq
C2^{-j(\frac{2\gamma+d}{p}-1)}\|f\|_{p,k},$$ for $2\leq
p\leq +\infty$ and $j\geq0$.\\
Since for $\frac{2\gamma+d}{2\gamma+d-1}<p<2\gamma+d$, we have
$$\sum_{j\geq0}^{+\infty}2^{-j(2\gamma+d-\frac{2\gamma+d}{p}-1)}<+\infty\quad\mbox{and}\quad\sum_{j\geq0}^{+\infty}2^{-j(\frac{2\gamma+d}{p}-1)}<+\infty.$$
This yields using (\ref{princ})
$$\|M(f)\|_{p,k}\leq C\,\|f\|_{p,k},$$ which completes
the proof.
\begin{remark}
The case $2\gamma+d=2$ implies that $k\equiv0$  and was proved by
Bourgain in \cite{B}.
\end{remark}
\addcontentsline{toc}{section}{Bibliographie}

\end{document}